\DeclareMathOperator{\RE}{Re}
\DeclareMathOperator{\Psl}{PSL} 
 \DeclareMathOperator{\Ad}{Ad}
 \DeclareMathOperator{\sign}{sign}
\DeclareMathOperator{\TRR}{Tr} \DeclareMathOperator{\MOD}{mod}
\DeclareMathOperator{\vol}{vol} \DeclareMathOperator{\spec}{spec}
\DeclareMathOperator{\degg}{deg}
\DeclareMathOperator{\II}{i}
\DeclareMathOperator{\IDD}{id}
\newtheorem{theorem}{Theorem}
\newtheorem{corollary}[theorem]{Corollary}
\newtheorem{definition}[theorem]{Definition}
\newenvironment{teorem}[2][Theorem]{\begin{trivlist}
\item[\hskip \labelsep {\bfseries #1}\hskip \labelsep {\bfseries #2}]}{\end{trivlist}}
\newenvironment{rem}[2][Acknowledgment.]{\begin{trivlist}
\item[\hskip \labelsep {\bfseries #1}\hskip \labelsep {\bfseries #2}]}{\end{trivlist}}
\numberwithin{equation}{section}
\numberwithin{theorem}{section}
\newcommand{\set}[1]{\left\{#1\right\}}
\newcommand{\abs}[1]{\left\vert#1\right\vert}
\newcommand{\br}[1]{\left(#1\right)}
\newcommand{\SqBr}[1]{\left[#1\right]}
\begin{document}

\title[Order of zeta functions]{Order of zeta functions for compact even-dimensional symmetric spaces}
\author{M. Avdispahi\'c, D\v z. Gu\v si\'c and D. Kamber}

\address{University of Sarajevo, Department of Mathematics, Zmaja od Bosne
35, 71000 Sarajevo, Bosnia and Herzegovina}
\email{\textbf{mavdispa@pmf.unsa.ba}}

\address{University of Sarajevo, Department of Mathematics, Zmaja od Bosne
35, 71000 Sarajevo, Bosnia and Herzegovina}
\email{\textbf{dzenang@pmf.unsa.ba}}

\address{University of Sarajevo, Department of Mathematics, Zmaja od Bosne
35, 71000 Sarajevo, Bosnia and Herzegovina}
\email{\textbf{dinakamber@pmf.unsa.ba}}

\keywords{Selberg zeta function, Ruelle zeta function, locally
symmetric spaces}

\subjclass[2010]{11M36, 30D15, 37C30}

\maketitle

\begin{abstract}
Some zeta functions which are naturally attached to the locally homogeneous vector bundles over compact locally symmetric spaces of rank one are investigated. We prove that such functions can be expressed in terms of entire functions whose order is not larger than the dimension of the corresponding compact, even-dimensional, locally symmetric space.
\end{abstract}

\section{Introduction}

Let $Y$ be a compact $n-$ dimensional (n even), locally symmetric Riemannian manifold with strictly negative sectional curvature.

In \cite{Bunke}, authors derived the properties of certain zeta functions canonically associated with the geodesic flow of $Y$. Motivated by the fact that the classical Selberg zeta function \cite{Selberg} is an entire function of order two and following Park's method \cite{Park} on hyperbolic manifolds with cusps, Avdispahi\'c and Gu\v si\'c \cite{AG} derived an analogous result for the zeta functions described in \cite{Bunke}. The main purpose of this paper is to give yet another proof of the result \cite{AG} based on Pavey's approach \cite{Pav} in the quartic fields setting (see also, \cite{DeitmarPavey}).

We found the second proof worth presenting because of the following facts.

Note that both approaches appear in literature. More importantly, their ingredients as well as the results they yield, are usually used later on, either to obtain appropriate estimates on the number of singularities or to derive approximate formulas for the logarithmic derivative of zeta functions (see, e.g., \cite{Randol2}, \cite{Pav}). These results are further exploited to achieve more refined error terms in the prime geodesic theorem. Various authors treat the compact hyperbolic Riemann surface case \cite{Randol1}, \cite{Hejhal1}, quartic fields case \cite{Pav}, \cite{DeitmarPavey}, higher dimensional manifolds case \cite{AG0}, \cite{Park}.

The differences between the first and the second proof will be pointed out in the sequel.

\section{Notation and normalization}

Let $X$ be the universal covering of $Y$. We have $X=G^{'}/K^{'}$, where $G^{'}$ is the group of orientation preserving isometries of $X$ and $K^{'}$ is the stabilizer of some fixed base point $x_{0}\in X$. $G^{'}$ is a connected semisimple Lie group of real rank one and $K^{'}$ is maximal compact in $G^{'}$.

Let $\Gamma$ be a  discrete, co-compact, torsion-free subgroup of $G^{'}$ such that $Y=\Gamma\backslash G^{'}/K^{'}$.

Following \cite[p.~17]{Bunke}, we consider a finite covering group $G$ of $G^{'}$ such that the embedding $\Gamma\hookrightarrow G^{'}$ lifts to an embedding $\Gamma\hookrightarrow G$. We have $X=G/K$, $Y=\Gamma\backslash G/K$, where $K$ is the preimage of $K^{'}$ under the covering and $K$ is maximal compact in $G$.

Assume that $G$ is a linear group.

We denote the Lie algebras of $G$, $K$ by $\mathfrak{g}$, $\mathfrak{k}$, respectively. Let $\mathfrak{g}=\mathfrak{k}\oplus\mathfrak{p}$ be the Cartan decomposition with Cartan involution $\theta$.

A compact dual space $X_{d}$ of $X$ is given by $X_{d}=G_{d}/K$, where $G_{d}$ is the analytic subgroup of $GL\br{n,\mathbb{C}}$ corresponding to $\mathfrak{g}_{d}=\mathfrak{k}\oplus\mathfrak{p}_{d}$, $\mathfrak{p}_{d}=\II\mathfrak{p}$. Following \cite[p.~18]{Bunke}, we normalize the metric on $X_{d}$ such that the multiplication by $\II$ induces an isometry between $\mathfrak{p}\cong T_{x_{0}}X$ and $\mathfrak{p}_{d}\cong T_{x_{0}}X_{d}$. We normalize the $\Ad\br{G}$-invariant inner product $\br{.,.}$ on $\mathfrak{g}$ to restrict to the metric on $T_{x_{0}}X\cong\mathfrak{p}$.

Let $\mathfrak{a}$ be a maximal abelian subspace of $\mathfrak{p}$. Then, the dimension of $\mathfrak{a}$ is one. Let $M$ be the centralizer of $A=\exp\br{\mathfrak{a}}$ in $K$ with Lie algebra $\mathfrak{m}$. We have $SX=G/M$, $SY=\Gamma\backslash G/M$, where $SX$ and $SY$ denote the unit sphere bundles of $X$ and $Y$, respectively.

Denote by $\mathfrak{g}_{\mathbb{C}}$, $\mathfrak{a}_{\mathbb{C}}$, etc. the complexifications of $\mathfrak{g}$, $\mathfrak{a}$, etc.

We fix the set of positive roots $\Phi^{+}\br{\mathfrak{g},\mathfrak{a}}$ of $\br{\mathfrak{g},\mathfrak{a}}$ and define

\[\rho=\frac{1}{2}\sum_{\alpha\in\Phi^{+}\br{\mathfrak{g},\mathfrak{a}}}\dim\br{\mathfrak{n}_{\alpha}}\alpha\in\mathfrak{a}_{\mathbb{C}}^{*},\]
\newline
where $\mathfrak{n}_{\alpha}$ denotes the root space of $\alpha$.

By Propositions 1.1 and 1.2 \cite[pp.~20-23]{Bunke}, for every $\sigma\in\hat{M}$ there exists an element $\gamma\in R\br{K}$ such that $i^{*}\br{\gamma}=\sigma$, where $i^{*}:R\br{K}\rightarrow R\br{M}$ is the restriction and $R\br{K}$, $R\br{M}$ are the representation rings over $\mathbb{Z}$ of $K$, $M$, respectively. Following \cite[p.~27]{Bunke}, we associate to $\gamma=\sum a_{i}\gamma_{i}$, ($a_{i}\in\mathbb{Z}$, $\gamma_{i}\in\hat{K}$), $\mathbb{Z}_{2}$-graded homogeneous vector bundles $V\br{\gamma}=V\br{\gamma}^{+}\oplus V\br{\gamma}^{-}$ and $V_{d}\br{\gamma}=V_{d}\br{\gamma}^{+}\oplus V_{d}\br{\gamma}^{-}$ on $X$ and $X_{d}$, respectively, such that

\[V\br{\gamma}^{\pm}=G\times_{K}V_{\gamma}^{\pm},\]
\[V_{d}\br{\gamma}^{\pm}=G_{d}\times_{K}V_{\gamma}^{\pm},\]
\[V_{\gamma}^{\pm}=\bigoplus_{\sign\br{a_{i}}=\pm 1}\bigoplus_{m=1}^{\abs{a_{i}}}V_{\gamma_{i}},\]
\newline
where $V_{\gamma_{i}}$ is the representation space of $\gamma_{i}$. Furthermore, for a finite-dimensional unitary representation $\br{\chi,V_{\chi}}$ of $\Gamma$, we define $\mathbb{Z}_{2}$-graded vector bundle $V_{Y,\chi}\br{\gamma}=\Gamma\backslash\br{V_{\chi}\otimes V\br{\gamma}}$ on $Y$.

If $\mathfrak{t}$ is a Cartan subalgebra of $\mathfrak{m}$, then $\rho_{\mathfrak{m}}=\frac{1}{2}\sum_{\alpha\in\Phi^{+}\br{\mathfrak{m}_{\mathbb{C}},\mathfrak{t}}}\in\II\mathfrak{t}^{*}$ and the highest weight $\mu_{\sigma}\in\II\mathfrak{t}^{*}$ of $\sigma$ are defined (see, \cite[pp.~19-20]{Bunke}). Put

\[c\br{\sigma}=\abs{\rho}^{2}+\abs{\rho_{\mathfrak{m}}}^{2}-\abs{\mu_{\sigma}+\rho_{\mathfrak{m}}}^{2}.\]
\newline
Note that the norms are defined by the complex bilinear extension to $\mathfrak{g}_{\mathbb{C}}$ of $\br{.,.}$.

Let $\Omega$ be the Casimir element of the complex universal enveloping algebra $\mathcal{U}\br{\mathfrak{g}}=\mathcal{U}\br{\mathfrak{g}_{d}}$ of $\mathfrak{g}$, which is also fixed by $\br{.,.}$. $\Omega$ acts in a natural way on sections of the bundles $V\br{\gamma}$ and $V_{d}\br{\gamma}$. Moreover, it acts as a $G$-invariant differential operator on $C^{\infty}\br{X,V\br{\gamma}}$ and hence it descends to $C^{\infty}\br{Y,V_{Y,\chi}\br{\gamma}}$. Therefore, it makes sense to define the operators (see, \cite[p.~28]{Bunke})
\[A_{d}\br{\gamma,\sigma}^{2}=\Omega+c\br{\sigma}:C^{\infty}\br{X,V_{d}\br{\gamma}}\rightarrow C^{\infty}\br{X,V_{d}\br{\gamma}},\]
\[A_{Y,\chi}\br{\gamma,\sigma}^{2}=-\Omega-c\br{\sigma}:C^{\infty}\br{Y,V_{Y,\chi}\br{\gamma}}\rightarrow C^{\infty}\br{Y,V_{Y,\chi}\br{\gamma}}.\]
Put
\[\delta=\frac{1}{2}\sum_{\alpha\in\Phi^{+}\br{\mathfrak{g}_{\mathbb{C}},\mathfrak{h}_{\mathbb{C}}}}\alpha,\]
\newline
where $\mathfrak{h}_{\mathbb{C}}=\mathfrak{t}_{\mathbb{C}}\oplus\mathfrak{a}_{\mathbb{C}}$, $\mathfrak{t}$ is a maximal abelian subalgebra of $\mathfrak{m}$ and $\Phi^{+}\br{\mathfrak{g}_{\mathbb{C}},\mathfrak{h}_{\mathbb{C}}}$ is the set of positive roots satisfying condition that $\alpha_{|\mathfrak{a}}\in\Phi^{+}\br{\mathfrak{g},\mathfrak{a}}$ implies $\alpha\in\Phi^{+}\br{\mathfrak{g}_{\mathbb{C}},\mathfrak{h}_{\mathbb{C}}}$ for $\alpha\in\Phi\br{\mathfrak{g}_{\mathbb{C}},\mathfrak{h}_{\mathbb{C}}}$. Define $\rho_{\mathfrak{m}}=\delta-\rho$ and the root vector $H_{\alpha}\in\mathfrak{a}$ for $\alpha\in\Phi^{+}\br{\mathfrak{g},\mathfrak{a}}$ by

\[\lambda\br{H_{\alpha}}=\frac{\br{\lambda,\alpha}}{\br{\alpha,\alpha}},\,\,\forall\lambda\in\mathfrak{a}^{*}.\]
\newline
We adopt the following two definitions from \cite{Bunke}.
\newline
\begin{definition}\label{definition2.1}
\cite[p.~47, Def. 1.13]{Bunke} Let $\alpha\in\Phi^{+}\br{\mathfrak{g},\mathfrak{a}}$ be the long root and $T=\abs{\alpha}$. For $\sigma\in\hat{M}$ we define $\epsilon_{\sigma}\in\set{0,\frac{1}{2}}$ by

\[\epsilon_{\sigma}=\frac{\abs{\rho}}{T}+\varepsilon_{\alpha}\br{\sigma}\,\MOD\,\mathbb{Z}\]
\newline
and the lattice $L\br{\sigma}\subset\mathbb{R}\cong\mathfrak{a}^{*}$ by $L\br{\sigma}=T\br{\epsilon_{\sigma}+\mathbb{Z}}$. Finally, for $\lambda\in\mathfrak{a}_{\mathbb{C}}^{*}\cong\mathbb{C}$ we set
\[P_{\sigma}\br{\lambda}=\prod_{\beta\in\Phi^{+}\br{\mathfrak{g}_{\mathbb{C}},\mathfrak{h}_{\mathbb{C}}}}\frac{\br{\lambda+\mu_{\sigma}+\rho_{\mathfrak{m}},\beta}}{\br{\delta,\beta}}.\]
\end{definition}\
\newline
Note that $\varepsilon_{\alpha}\br{\sigma}\in\set{0,\frac{1}{2}}$ for $\alpha\in\Phi^{+}\br{\mathfrak{g},\mathfrak{a}}$ is defined by $\exp\br{2\pi\II\varepsilon_{\alpha}\br{\sigma}}=\sigma\br{\exp\br{2\pi\II H_{\alpha}}}\in\set{\pm 1}$ (see, \cite[p.~40]{Bunke}). Also, the fact that $\alpha\in\Phi^{+}\br{\mathfrak{g},\mathfrak{a}}$ is the long root means that $\Phi^{+}\br{\mathfrak{g},\mathfrak{a}}=\set{\alpha}$ or $\Phi^{+}\br{\mathfrak{g},\mathfrak{a}}=\set{\frac{\alpha}{2},\alpha}$.

\begin{definition}\label{definition2.2}
\cite[p.~49, Def. 1.17]{Bunke} Let $\sigma\in\hat{M}$. Then, $\gamma\in R\br{K}$ is called $\sigma$-admissible if $i^{*}\br{\gamma}=\sigma$ and $m_{d}\br{s,\gamma,\sigma}=P_{\sigma}\br{s}$ for all $0\leq s\in L\br{\sigma}$.
\end{definition}\
\newline
The multiplicity $m_{d}\br{s,\gamma,\sigma}$ is the weighted dimension of eigenspace of $A_{d}\br{\gamma,\sigma}$, i.e., $m_{d}\br{s,\gamma,\sigma}=\TRR E_{A_{d}\br{\gamma,\sigma}}\br{\set{s}}$, $s\in\mathbb{C}$, where $E_{A}\br{.}$ is the family of spectral projections of some normal operator $A$. Similarly, we introduce $m_{\chi}\br{s,\gamma,\sigma}=\TRR E_{A_{Y,\chi}\br{\gamma,\sigma}}\br{\set{s}}$, $s\in\mathbb{C}$ (see, \cite[p.~30]{Bunke}).

Through the rest of the paper we shall assume that the metric on $Y$ is normalized such that the sectional curvature varies between $-1$ and $-4$. Then, $T\geq 1$ (see, \cite[pp.~150f]{Bunke}).

\section{Zeta functions of Selberg and Ruelle}

Denote by $C\Gamma$ the set of conjugacy classes of $\Gamma$.

$G/\Gamma$ is a compact space. Hence, $\Gamma$ does not contain parabolic elements. On the other hand, being torsion-free, $\Gamma$ contains no nontrivial eliptic elements. In other words, every $\gamma\in\Gamma$, $\gamma\neq 1$ is hyperbolic.

As usual, we may assume that a hyperbolic element $g\in G$ has the form $g=am\in A^{+}M$, where $A^{+}=\exp\br{\mathfrak{a}^{+}}$ and $\mathfrak{a}^{+}$ is the positive Weyl chamber in $\mathfrak{a}$ (see e.g., \cite{GangoliLength}, \cite{GangoliZetaCompact}).

Let $\br{\sigma,V_{\sigma}}$ and $\br{\chi,V_{\chi}}$ be some finite-dimensional unitary representations of $M$ and $\Gamma$, respectively and $V_{\chi}\br{\sigma}=\Gamma\backslash\br{G\times_{M}V_{\sigma}\otimes V_{\chi}}$ corresponding vector bundle. We introduce $\varphi_{\chi,\sigma}$ by (see, \cite[p.~95]{Bunke})

\[\varphi_{\chi,\sigma}:\mathbb{R}\times SY\ni\br{t,\Gamma gM}\rightarrow \Gamma g\exp\br{-tH}M\in SY\]
\newline
for some unit vector $H\in\mathfrak{a}^{+}$.

It is well known fact that free homotopy classes of closed paths on $Y$ are in a natural one-to-one correspondence with the set $C\Gamma$. In our situation, if $\SqBr{1}\neq\SqBr{g}\in C\Gamma$, the corresponding closed orbit $c$ is given by

\[c=\set{\Gamma g^{'}\exp\br{-tH}M\,|\,t\in\mathbb{R}},\]
\newline
where $g^{'}$ is chosen such that $\br{g^{'}}^{-1}gg^{'}=ma\in MA^{+}$. The length $l\br{c}$ of $c$ is given by $l\br{c}=\abs{\log a}$. The lift of $c$ to $V_{\chi}\br{\sigma}$ induces the monodromy $\mu_{\chi,\sigma}\br{c}$ on the fibre over $\Gamma g^{'}M$ as follows

\[\mu_{\chi,\sigma}\br{c}\br{\SqBr{g^{'},v\otimes w}}=\SqBr{g^{'},\sigma\br{m}v\otimes\chi\br{g}w}.\]
\newline
Now, for $s\in\mathbb{C}$, $\RE\br{s}>2\rho$, the Ruelle zeta function for the flow $\varphi_{\chi,\sigma}$ is defined by the infinite product

\[Z_{R,\chi}\br{s,\sigma}=\prod_{c\,\,prime}\dim\br{1-\mu_{\chi,\sigma}\br{c}e^{-sl\br{c}}}^{\br{-1}^{n-1}},\]
\newline
where a closed orbit $c$ through $y\in SY$ is called prime if $l\br{c}$ is the smallest time such that $\varphi\br{l\br{c},y}=y$.

Recall that the Anosov property of $\varphi$ means the existence of a $d\varphi$-invariant splitting
\[TSY=T^{s}SY\oplus T^{0}SY\oplus T^{u}SY,\]
\newline
where $T^{0}SY$ consists of vectors tangential to the orbits, while the vectors in $T^{s}SY$ $\br{T^{u}SY}$ shrink (grow) exponentially with respect to the metric as $t\rightarrow\infty$, when transported with $d\varphi$. In our case, the splitting is given by (see, \cite[p.~97]{Bunke})
\[TSY\cong\Gamma\backslash G\times_{M}\br{\bar{\mathfrak{n}}\oplus\mathfrak{a}\oplus\mathfrak{n}},\]
\newline
where $\bar{\mathfrak{n}}=\theta\mathfrak{n}$, $\mathfrak{n}=\sum_{\alpha\in\Phi^{+}\br{\mathfrak{g},\mathfrak{a}}}\mathfrak{n}_{\alpha}$. Now, the monodromy $P_{c}$ in $TSY$ of $c$ decomposes as follows
\[P_{c}=P_{c}^{s}\oplus\IDD\oplus P_{c}^{u}.\]
\newline
Finally, for $s\in\mathbb{C}$, $\RE\br{s}>\rho$, the Selberg zeta function for the flow $\varphi_{\chi,\sigma}$ is defined by the infinite product

\[Z_{S,\chi}\br{s,\sigma}=\prod_{c\,\,prime}\prod_{k=0}^{\infty}\det\br{1-\mu_{\chi,\sigma}\br{c}\otimes S^{k}\br{P_{c}^{s}}e^{-\br{s+\rho}l\br{c}}},\]
\newline
where $S^{k}$ denotes the $k$-th symmetric power of an endomorphism.

It is known \cite{Fried}, that the Ruelle zeta function can be expressed in terms of Selberg zeta functions. We have
\newline
\begin{equation}\label{trijedan}
Z_{R,\chi}\br{s,\sigma}=\prod_{p=0}^{n-1}\prod_{\br{\tau,\lambda}\in I_{p}}Z_{S,\chi}\br{s+\rho-\lambda,\tau\otimes\sigma}^{\br{-1}^{p}},
\end{equation}
where
\[I_{p}=\set{\br{\tau,\lambda}\,|\,\tau\in \hat{M},\lambda\in\mathbb{R}}\]
\newline
are such sets that $\Lambda^{p}\mathfrak{n}_{\mathbb{C}}$ decomposes with respect to $MA$ as

\[\Lambda^{p}\mathfrak{n}_{\mathbb{C}}=\sum_{\br{\tau,\lambda}\in I_{p}}V_{\tau}\otimes\mathbb{C}_{\lambda}.\]
\newline
Here, $V_{\tau}$ is the space of the representation $\tau$ and $\mathbb{C}_{\lambda}$, $\lambda\in\mathbb{C}$ is the one-dimensional representation of $A$ given by $A\ni a\rightarrow a^{\lambda}$.

Let $d_{Y}=-\br{-1}^{\frac{n}{2}}$. Concerning the singularities of the Selberg zeta function, the following is known.

\begin{teorem}{A.}\label{A}
\cite[p.~113, Th. 3.15]{Bunke} \textit{The Selberg zeta function $Z_{S,\chi}\br{s,\sigma}$ has a
meromorphic continuation to all of $\mathbb{C}$. If $\gamma$ is
$\sigma$-admissible, the singularities of $Z_{S,\chi}\br{s,\sigma}$ are the following:\\
\begin{itemize}
    \item[(1)] at $\pm\II$$s$ of order $m_{\chi}\br{s,\gamma,\sigma}$ if
    $s\neq 0$ is an eigenvalue of
    $A_{Y,\chi}\br{\gamma,\sigma}$,\\
    \item[(2)] at $s=0$ of order $2m_{\chi}\br{0,\gamma,\sigma}$ if
    $0$ is an eigenvalue of $A_{Y,\chi}\br{\gamma,\sigma}$,\\
    \item[(2)] at $-s$, $s\in T\br{\mathbb{N}-\epsilon_{\sigma}}$ of
    order
    $2\frac{d_{Y}\dim\br{\chi}\vol\br{Y}}{\vol\br{X_{d}}}m_{d}\br{s,\gamma,\sigma}$. Then $s>0$ is an eigenvalue of $A_{d}\br{\gamma,\sigma}$.\\
\end{itemize}
If two such points coincide, then the orders add up.}
\end{teorem}\

\section{Main result}

We prove the following theorem.

\begin{theorem}\label{novo}
If $\gamma$ is $\sigma$-admissible, then there exist entire functions $Z_{S}^{1}\br{s}$, $Z_{S}^{2}\br{s}$ of order at most $n$ such that
\begin{equation}\label{jedan}
Z_{S,\chi}\br{s,\sigma}=\frac{Z_{S}^{1}\br{s}}{Z_{S}^{2}\br{s}}.
\end{equation}\
\newline
Here, the zeros of $Z_{S}^{1}\br{s}$ correspond to the
zeros of $Z_{S,\chi}\br{s,\sigma}$ and the zeros of
$Z_{S}^{2}\br{s}$ correspond to the poles of
$Z_{S,\chi}\br{s,\sigma}$. The orders of the zeros of
$Z_{S}^{1}\br{s}$ resp. $Z_{S}^{2}\br{s}$ equal the orders of the
corresponding zeros resp. poles of $Z_{S,\chi}\br{s,\sigma}$.
\end{theorem}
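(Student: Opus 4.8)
The idea, following \cite{Pav}, is to read the divisor of $Z_{S,\chi}\br{s,\sigma}$ off Theorem \ref{A}, to realise its zeros and its poles as the zero sets of two Weierstrass canonical products of genus $n$, and finally to absorb the resulting nowhere vanishing quotient into one of these two products. Since $\gamma$ is $\sigma$-admissible, Definition \ref{definition2.2} gives $m_{d}\br{s,\gamma,\sigma}=P_{\sigma}\br{s}$ for $0\le s\in L\br{\sigma}$, so by Theorem \ref{A} the singularities of $Z_{S,\chi}\br{s,\sigma}$ are precisely: (i) $\pm\II s$, of order $m_{\chi}\br{s,\gamma,\sigma}$, for every nonzero eigenvalue $s$ of $A_{Y,\chi}\br{\gamma,\sigma}$; (ii) $s=0$, of order $2m_{\chi}\br{0,\gamma,\sigma}$, if $0$ is such an eigenvalue; (iii) $-s$, $s\in T\br{\mathbb{N}-\epsilon_{\sigma}}$, of order $2\frac{d_{Y}\dim\br{\chi}\vol\br{Y}}{\vol\br{X_{d}}}P_{\sigma}\br{s}$, where a negative order means a pole and coinciding orders are added. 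Here $P_{\sigma}$ is a polynomial: in the product defining $P_{\sigma}\br{\lambda}$ only the factors indexed by the $\beta\in\Phi^{+}\br{\mathfrak{g}_{\mathbb{C}},\mathfrak{h}_{\mathbb{C}}}$ with $\beta_{|\mathfrak{a}}\neq 0$ depend on $\lambda$, each of them affine in $\lambda$, and there are $\sum_{\alpha\in\Phi^{+}\br{\mathfrak{g},\mathfrak{a}}}\dim\br{\mathfrak{n}_{\alpha}}=\dim\br{\mathfrak{n}}=\dim\br{\mathfrak{p}}-\dim\br{\mathfrak{a}}=n-1$ of them, so $\degg P_{\sigma}\le n-1$.

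Next I would bound the counting function $N\br{r}=\sum_{\abs{w}\le r}\abs{\operatorname{ord}_{w}Z_{S,\chi}\br{s,\sigma}}$. The points in (i)--(ii) are controlled by Weyl's law: $A_{Y,\chi}\br{\gamma,\sigma}^{2}=-\Omega-c\br{\sigma}$ is a second order elliptic differential operator on the compact $n$-dimensional manifold $Y$ acting on sections of $V_{Y,\chi}\br{\gamma}$, so the number of eigenvalues $s$ of $A_{Y,\chi}\br{\gamma,\sigma}$ with $\abs{s}\le r$, counted with multiplicity, is $O\br{r^{n}}$; hence (i)--(ii) contribute $O\br{r^{n}}$ to $N\br{r}$. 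For (iii), the admissible $s\in T\br{\mathbb{N}-\epsilon_{\sigma}}$ with $s\le r$ number $O\br{r}$, and each carries order $O\br{s^{n-1}}=O\br{r^{n-1}}$ by the first paragraph, contributing $O\br{r^{n}}$ as well. Therefore $N\br{r}=O\br{r^{n}}$, so the zeros $\br{a_{j}}_{j}$ and the poles $\br{b_{k}}_{k}$ of $Z_{S,\chi}\br{s,\sigma}$, each listed with multiplicity, have convergence exponent at most $n$.

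Write $\operatorname{div}Z_{S,\chi}\br{s,\sigma}=D_{+}-D_{-}$ with $D_{\pm}\ge 0$ supported off $0$ and with disjoint supports, let $m^{\pm}\ge 0$ be the orders of $Z_{S,\chi}\br{s,\sigma}$ at $0$ on the zero and pole side, and set
\[Z_{S}^{1}\br{s}=s^{m^{+}}\prod_{j}E_{n}\br{\frac{s}{a_{j}}},\qquad Z_{S}^{2}\br{s}=s^{m^{-}}\prod_{k}E_{n}\br{\frac{s}{b_{k}}},\]
where $E_{n}$ denotes the Weierstrass primary factor of genus $n$. By the previous step and Hadamard's theory these are entire of order at most $n$, their zero sets being exactly the zero set, resp.\ the pole set, of $Z_{S,\chi}\br{s,\sigma}$, with the same multiplicities; in particular \eqref{jedan}, the zero--pole correspondence, and the equality of orders are all built in once we know $Z_{S,\chi}\br{s,\sigma}=e^{g\br{s}}Z_{S}^{1}\br{s}/Z_{S}^{2}\br{s}$ with $g$ a polynomial of degree at most $n$, since then $e^{g}Z_{S}^{1}$ may be renamed $Z_{S}^{1}$ without increasing its order.

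So the remaining, and main, task is to show that the entire nowhere vanishing function $F\br{s}=Z_{S,\chi}\br{s,\sigma}Z_{S}^{2}\br{s}/Z_{S}^{1}\br{s}$ has order at most $n$, for then $\log F$ is automatically a polynomial of degree $\le n$. Equivalently, since $Z_{S}^{1}$ and $Z_{S}^{2}$ are already controlled, one needs $\log\abs{Z_{S,\chi}\br{s,\sigma}}=O\br{\abs{s}^{n}}$ for $s$ bounded away from the divisor. This is carried out in the standard way: $Z_{S,\chi}\br{s,\sigma}$ is bounded, through its Euler product, in a half-plane $\RE s\ge c$; the functional equation of \cite{Bunke} writes $Z_{S,\chi}\br{s,\sigma}$ in terms of $Z_{S,\chi}\br{-s,\sigma}$ times a factor built from the polynomial Plancherel density and from the topological zeros of (iii) --- a factor which, by the analysis of the first three paragraphs, is itself of order $\le n$ --- giving $\log\abs{Z_{S,\chi}\br{s,\sigma}}=O\br{\abs{s}^{n}}$ for $\RE s\le -c$; and a Phragm\'en--Lindel\"of argument in the intermediate vertical strip, fed by $N\br{r}=O\br{r^{n}}$ via Jensen's formula and the Borel--Carath\'eodory inequality, propagates a polynomial bound of degree $n$ across the strip. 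I expect this last analytic step --- pinning down the order of the functional-equation factor and running Phragm\'en--Lindel\"of with the correct a priori zero count --- to be the only real obstacle; everything else is bookkeeping with divisors and canonical products.
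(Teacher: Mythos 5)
Your first three paragraphs are sound and essentially reproduce the paper's preparatory work: reading the divisor off Theorem A, checking that the zero/pole set has convergence exponent at most $n$ (your count via $\degg P_{\sigma}\leq n-1$ is a legitimate substitute for the paper's Voros-style Weyl law for $A_{d}\br{\gamma,\sigma}$), and reducing everything to showing that the nowhere-vanishing factor $F\br{s}=Z_{S,\chi}\br{s,\sigma}Z_{S}^{2}\br{s}/Z_{S}^{1}\br{s}$ equals $e^{g\br{s}}$ with $\degg\br{g}\leq n$. The genuine gap is that this last step, which is the actual content of the theorem, is only sketched, and the sketch as written would not run. Phragm\'en--Lindel\"of cannot be applied to $Z_{S,\chi}\br{s,\sigma}$ in a vertical strip containing the imaginary axis: the spectral singularities $\pm\II s$ accumulate there, so the function is neither holomorphic nor pole-free in the strip, and you would first have to strip off infinitely many Weierstrass factors and then control them from below (minimum-modulus estimates for $Z_{S}^{1}$ off exceptional disks), none of which is supplied. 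Likewise the assertion that the functional-equation factor from \cite{Bunke} ``is itself of order $\leq n$'' is exactly the kind of estimate you are trying to prove; you neither write the functional equation down nor bound its exponential factor, so the claimed bound $\log\abs{Z_{S,\chi}\br{s,\sigma}}=O\br{\abs{s}^{n}}$ away from the divisor remains unproved.

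For comparison, the paper closes this step with no growth estimates at all: it invokes the regularized-determinant formula of Bunke--Olbrich (Theorem 3.19), $Z_{S,\chi}\br{s,\sigma}=e^{h_{3}\br{s}}\det\br{A_{Y,\chi}\br{\gamma,\sigma}^{2}+s^{2}}\det\br{A_{d}\br{\gamma,\sigma}+s}^{2\frac{d_{Y}\dim\br{\chi}\vol\br{Y}}{\vol\br{X_{d}}}}$, in which the two determinants are known Weierstrass products whose exponential factors are polynomials of degree $n$. Differentiating $\log Z_{S,\chi}\br{s,\sigma}$ exactly $n+1$ times in this representation and in the Hadamard-type representation coming from \eqref{jedan} and \eqref{osam}, the sums over zeros and poles cancel term by term and one is left with $\frac{d^{n+1}}{ds^{n+1}}\br{g_{1}\br{s}-g_{2}\br{s}}=0$, which gives the degree bound. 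So either carry out your analytic route in full (explicit functional equation, minimum-modulus bounds, and a Phragm\'en--Lindel\"of argument adapted to a strip containing infinitely many singularities), or replace that part of your argument by the determinant-formula comparison; as it stands, the decisive step is missing.
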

\begin{proof}
Let $N_{1}\br{r}=\#\set{s\in\spec{A_{Y,\chi}\br{\gamma,\sigma}}|\abs{s}\leq r}$. By the Weyl asymptotic law (see, \cite[p.~66]{Bunke}),
\begin{equation}\label{dva}
N_{1}\br{r}\sim C_{1}r^{n},
\end{equation}
as $r\rightarrow +\infty$.

Now, we derive the Weyl asymptotic law for $A_{d}\br{\gamma,\sigma}$. In \cite[p.~529]{AG}, the Weyl asymptotic law for $A_{d}\br{\gamma,\sigma}$ is derived quite quickly by relying on the fact that $A_{d}^{2}\br{\gamma,\sigma}$ is elliptic and of the second order (see, \cite[p.~21]{Bunke2}). In this paper, we follow a more traditional and more informative, step by step, approach of Voros \cite{Voros}. We shall satisfy the assumptions $\br{1.1}$, $\br{1.2}$ and $\br{1.3}$ in \cite[p.~440]{Voros}.

By \cite[p.~109]{Bunke}, $s\in T\br{\mathbb{N}-\epsilon_{\sigma}}$ is an eigenvalue of $A_{d}\br{\gamma,\sigma}$ with multiplicity $m_{d}\br{s,\gamma,\sigma}$. $A_{d}\br{\gamma,\sigma}$ may have more eigenvalues, but the weighted multiplicities of these additional eigenvalues are zero. Moreover, $m_{d}\br{0,\gamma,\sigma}=0$. Since $\epsilon_{\sigma}\in\set{0,\frac{1}{2}}$ and $T\geq 1$, we have $s>0$ for $s\in T\br{\mathbb{N}-\epsilon_{\sigma}}$. Hence, it makes sense to define $N_{2}\br{r}=\#\set{s\in\spec{A_{d}\br{\gamma,\sigma}}|s\leq r}$.

Now, it is easily seen that the eigenvalues of $A_{d}\br{\gamma,\sigma}$ may be arranged in a manner that $\br{1.1}$ in \cite{Voros} holds true.

By \cite[p.~70, Def. 2.1]{Bunke}, the theta function

\[\theta_{d}\br{t,\sigma}=\TRR{e^{-tA_{d}\br{\gamma,\sigma}}}=\sum_{s\in\mathbb{C}}m_{d}\br{s,\gamma,\sigma}e^{-ts}=\]

\[\sum_{s\in T\br{\mathbb{N}-\epsilon_{\sigma}}}m_{d}\br{s,\gamma,\sigma}e^{-ts}\]
\newline
converges for $\RE{\br{t}}>0$. Hence, $\br{1.2}$ in \cite{Voros} is satisfied.

Finally, by \cite[p.~120]{Bunke}, $\theta_{d}\br{t,\sigma}$ admits a full asymptotic expansion

\[\theta_{d}\br{t,\sigma}\sim\sum_{k=-n}^{\infty}d_{k}t^{k}\]
\newline
for $t\rightarrow 0$. In other words, $\br{1.3}$ in \cite{Voros} holds also true.

Now, by $\br{1.4}$ in \cite[p.~440]{Voros}, the eigenvalues of $A_{d}\br{\gamma,\sigma}$ satisfy the Weyl asymptotic law
\begin{equation}\label{tri}
N_{2}\br{r}\sim \frac{d_{-n}}{\Gamma\br{1+n}}r^{n}=C_{2}r^{n},
\end{equation}
as $r\rightarrow +\infty$.

Denote by $S_{1}$ resp. $S_{2}$ the sets consisting of the singularities of $Z_{S,\chi}\br{s,\sigma}$ appearing in $\br{1}$ and $\br{2}$ resp. $\br{3}$ of Theorem A. Reasoning as in \cite[p.~529]{AG}, and using (\ref{dva}), we obtain

\begin{equation}\label{cetiri}
\sum_{s\in
S_{1}\setminus\set{0}}\abs{s}^{-\br{n+\varepsilon}}=O\br{1},
\end{equation}
\newline
for $\varepsilon>0$. Similarly, by (\ref{tri}), we have

\begin{equation}\label{pet}
\sum_{s\in S_{2}}\abs{s}^{-\br{n+\varepsilon}}=O\br{1},
\end{equation}
\newline
for $\varepsilon>0$. Consequently, for $\varepsilon>0$, by (\ref{cetiri})
and (\ref{pet}) we conclude

\begin{equation}\label{sest}
\sum_{s\in
S\backslash\set{0}}\abs{s}^{-\br{n+\varepsilon}}<\infty,
\end{equation}
\newline
where $S$ denotes the set of singularities of
$Z_{S,\chi}\br{s,\sigma}$.

Let $R_{1}$ resp. $R_{2}$ denote the sets of zeros resp. poles of $Z_{S,\chi}\br{s,\sigma}$. It follows from (\ref{sest}) that

\begin{equation}\label{sedam}
\sum_{s\in R_{i}\setminus\set{0}}\abs{s}^{-\br{n+1}}\leq\sum_{s\in R_{i}\setminus\set{0}}\abs{s}^{-\br{n+\varepsilon}}\leq\sum_{s\in S\setminus\set{0}}\abs{s}^{-\br{n+\varepsilon}}<\infty,
\end{equation}
\newline
for $i=1,2$ and $\varepsilon>0$.

In \cite{AG}, the authors followed Park \cite[pp.~93-94]{Park}. The relation (\ref{sedam}) and Theorem 2.6.5 in \cite[p.~19]{Boas} were used to prove that the canonical products $W_{i}\br{s}$ over $R_{i}\backslash\set{0}$, $i=1,2$, are entire functions of order not larger than $n$ over $\mathbb{C}$. Therefore, by using nothing else but the fact that the logarithmic derivative of $Z_{S,\chi}\br{s,\sigma}$ is a Dirichlet series absolutely convergent for $\RE\br{s}\gg 0$, the conclusion was derived that $\degg\br{g\br{s}}\leq n$, where $g\br{s}$ is a polynomial such that $Z_{S,\chi}\br{s,\sigma}W_{1}\br{s}^{-1}W_{2}\br{s}s^{-2m_{\chi}\br{0,\gamma,\sigma}}=e^{g\br{s}}$. This completed the proof in \cite{AG}.

As opposed to \cite{AG}, in this paper we proceed following Pavey \cite[pp.~38-40]{Pav}. At the very beginning (not at the end of the proof), we conclude that $Z_{S,\chi}\br{s,\sigma}$ is of the form (\ref{jedan}). Thus, we reduce the problem to proving the part related to the order of $Z_{S}^{i}\br{s}$, $i=1,2$. We use (\ref{sedam}) and the Weierstrass Factorization Theorem \cite[p.~170]{Con} to form products (\ref{osam}) (not only the canonical products). Reasoning in a similar way as in \cite{AG}, we prove that the canonical products $P_{i}\br{s}$, $i=1,2$, which appear in (\ref{osam}), are entire functions of order at most $n$ over $\mathbb{C}$. In this way we further reduce the problem to proving the fact that $\degg\br{g_{i}\br{s}}\leq n$, $i=1,2$, where $g_{1}\br{s}$ and $g_{2}\br{s}$ are entire functions which naturally arise from the factorization theorem. We obtain two representations for the $n$-th derivative of the logarithmic derivative of $Z_{S,\chi}\br{s,\sigma}$ and compare them. The first one, (\ref{devet}), follows from (\ref{jedan}) and (\ref{osam}), while the second one (\ref{petnaest}), stems from the representation (\ref{deset}) of the Selberg zeta function $Z_{S,\chi}\br{s,\sigma}$ in terms of regularized determinants of elliptic operators $A_{Y,\chi}\br{\gamma,\sigma}$ and $A_{d}\br{\gamma,\sigma}$. We complete the proof by concluding that $\degg\br{g_{1}\br{s}-g_{2}\br{s}}\leq n$.

Now, we proceed with the proof.

By Theorem A, $Z_{S,\chi}\br{s,\sigma}$ is meromorphic. Hence, (see, \cite[p.~38]{Pav}), it may be represented in the form (\ref{jedan}), where $Z_{S}^{i}\br{s}$, $i=1,2$ are entire functions satisfying assumptions of Theorem \ref{novo}, except possibly the part concerning the order of $Z_{S}^{i}\br{s}$, $i=1,2$. In other words, it remains to prove that $Z_{S}^{i}\br{s}$, $i=1,2$ are of order at most $n$.

Now, $R_{i}$ is the set of zeros of $Z_{S}^{i}\br{s}$, $i=1,2$.

By (\ref{sedam}) and \cite[p.~282]{Con}, $Z_{S}^{i}\br{s}$ is of finite rank $p_{i}\leq n$ for $i=1,2$. According to the Weierstrass Factorization Theorem \cite[p.~170]{Con} (see also \cite[p.~39]{Pav}), there exist entire functions $g_{i}\br{s}$, $i=1,2$ such that

\begin{equation}\label{osam}
Z_{S}^{i}\br{s}=s^{n_{i}}e^{g_{i}\br{s}}\prod_{\rho\in R_{i}\setminus\set{0}}\br{1-\frac{s}{\rho}}\exp\br{\frac{s}{\rho}+\frac{s^{2}}{2\rho^{2}}+...+\frac{s^{p_{i}}}{p_{i}\rho^{p_{i}}}}=
\end{equation}

\[s^{n_{i}}e^{g_{i}\br{s}}P_{i}\br{s},\]
\newline
where $n_{i}$ is the order of the zero of $Z_{S}^{i}\br{s}$ at $s=0$, $i=1,2$.

$P_{i}\br{s}$ is a canonical product of rank $p_{i}$, $i=1,2$. Denote by $q_{i}$ the exponent of convergence of $R_{i}\setminus\set{0}$, $i=1,2$ (see, \cite[p.~286]{Con}). By (\ref{sedam}), $q_{i}\leq n$, $i=1,2$. Now, by \cite[p.~287, (d)]{Con}, the order of $P_{i}\br{s}$ is $q_{i}\leq n$, $i=1,2$.

Hence, in order to complete the proof of the theorem, it is enough to prove that $\degg\br{g_{i}\br{s}}\leq n$ for $i=1,2$.

By taking logarithms of both sides in (\ref{jedan}) and having in mind (\ref{osam}), we obtain
\[\log Z_{S,\chi}\br{s,\sigma}=\br{g_{1}\br{s}-g_{2}\br{s}}+\]
\[\br{n_{1}\log s+\sum_{\rho\in R_{1}\setminus\set{0}}\log\br{\rho-s}-\sum_{\rho\in R_{1}\setminus\set{0}}\log\rho}-\]
\[\br{n_{2}\log s+\sum_{\rho\in R_{2}\setminus\set{0}}\log\br{\rho-s}-\sum_{\rho\in R_{2}\setminus\set{0}}\log\rho}+\]
\[\sum_{\rho\in R_{1}\setminus\set{0}}\br{\frac{s}{\rho}+\frac{s^{2}}{2\rho^{2}}+...+\frac{s^{p_{1}}}{p_{1}\rho^{p_{1}}}}-
\sum_{\rho\in R_{2}\setminus\set{0}}\br{\frac{s}{\rho}+\frac{s^{2}}{2\rho^{2}}+...+\frac{s^{p_{2}}}{p_{2}\rho^{p_{2}}}}.\]
\newline
Differentiating $n+1$ times and taking into account the fact that $p_{i}\leq n$ for $i=1,2$ as well as the fact that $n$ is even, we conclude that

\begin{equation}\label{devet}
\frac{d^{n}}{ds^{n}}\frac{Z_{S,\chi}^{'}\br{s,\sigma}}{Z_{S,\chi}\br{s,\sigma}}=\frac{d^{n+1}}{ds^{n+1}}\br{g_{1}\br{s}-g_{2}\br{s}}+
\end{equation}
\[n_{1}\frac{n!}{s^{n+1}}-n_{2}\frac{n!}{s^{n+1}}+\sum_{\rho\in R_{1}\setminus\set{0}}\frac{n!}{\br{s-\rho}^{n+1}}-\sum_{\rho\in R_{2}\setminus\set{0}}\frac{n!}{\br{s-\rho}^{n+1}}.\]
\newline
On the other hand, by \cite[p.~118, Th. 3.19]{Bunke},

\begin{equation}\label{deset}
Z_{S,\chi}\br{s,\sigma}=
\end{equation}
\[\exp\br{\frac{\dim\br{\chi}\chi\br{Y}}{\chi\br{X_{d}}}
\sum\limits_{m=1}^{\frac{n}{2}}c_{-m}\frac{s^{2m}}{m!}\br{\sum\limits_{r=1}^{m-1}\frac{1}{r}-2\sum\limits_{r=1}^{2m-1}\frac{1}{r}}}\cdot\]
\[\det\br{A_{Y,\chi}\br{\gamma,\sigma}^{2}+s^{2}}
\det\br{A_{d}\br{\gamma,\sigma}+s}^{-\frac{2\dim\br{\chi}\chi\br{Y}}{\chi\br{X_{d}}}}=\]
\[e^{h_{3}\br{s}}\det\br{A_{Y,\chi}\br{\gamma,\sigma}^{2}+s^{2}}
\det\br{A_{d}\br{\gamma,\sigma}+s}^{-\frac{2\dim\br{\chi}\chi\br{Y}}{\chi\br{X_{d}}}},\]
\newline
where the coefficients $c_{k}$ are defined by the asymptotic
expansion

\[\TRR{e^{-tA_{d}\br{\gamma,\sigma}^{2}}}\overset{t\rightarrow 0}{\sim}\sum\limits_{k=-\frac{n}{2}}^{\infty}c_{k}t^{k}.\]
By \cite[p.~36, (1.13)]{Bunke},
\[\frac{\vol\br{Y}}{\vol\br{X_{d}}}=\br{-1}^{\frac{n}{2}}\frac{\chi\br{Y}}{\chi\br{X_{d}}}.\]
Hence,
\[2\frac{d_{Y}\dim\br{\chi}\vol\br{Y}}{\vol\br{X_{d}}}=-2\br{-1}^{\frac{n}{2}}\dim\br{\chi}\br{-1}^{\frac{n}{2}}\frac{\chi\br{Y}}{\chi\br{X_{d}}}=\]
\[-\frac{2\dim\br{\chi}\chi\br{Y}}{\chi\br{X_{d}}}.\]
Now, (\ref{deset}) becomes
\begin{equation}\label{jedanaest}
Z_{S,\chi}\br{s,\sigma}=
\end{equation}
\[e^{h_{3}\br{s}}\det\br{A_{Y,\chi}\br{\gamma,\sigma}^{2}+s^{2}}\det\br{A_{d}\br{\gamma,\sigma}+s}^{2\frac{d_{Y}\dim\br{\chi}\vol\br{Y}}{\vol\br{X_{d}}}}.\]
\newline
By \cite[pp.~120-121]{Bunke}, (see also \cite{Brocker})

\[\det\br{A_{d}\br{\gamma,\sigma}+s}=e^{h_{1}\br{s}}\Delta^{-}\br{s},\]
\newline
where $h_{1}\br{s}$ is a polynomial of order $n$ and $\Delta^{-}\br{s}$ is the Weierstrass product

\[\Delta^{-}\br{s}=\prod_{\mu\in\spec{A_{d}\br{\gamma,\sigma}}}\SqBr{\br{1+\frac{s}{\mu}}\exp\br{\sum_{r=1}^{n}\frac{\br{-s}^{r}}{r\mu^{r}}}}^{m_{d}\br{\mu,\gamma,\sigma}}=\]

\[\prod_{\mu\in\spec{A_{d}\br{\gamma,\sigma}}}\SqBr{\br{1-\frac{s}{-\mu}}\exp\br{\sum_{r=1}^{n}\frac{s^{r}}{r\br{-\mu}^{r}}}}^{m_{d}\br{\mu,\gamma,\sigma}}.\]
\newline
Hence, by the definition of the set $S_{2}$,

\begin{equation}\label{dvanaest}
\det\br{A_{d}\br{\gamma,\sigma}+s}^{2\frac{d_{Y}\dim\br{\chi}\vol\br{Y}}{\vol\br{X_{d}}}}=e^{2\frac{d_{Y}\dim\br{\chi}\vol\br{Y}}{\vol\br{X_{d}}}h_{1}\br{s}}\cdot
\end{equation}

\[\prod_{\mu\in\spec{A_{d}\br{\gamma,\sigma}}}\SqBr{\br{1-\frac{s}{-\mu}}
\exp\br{\sum_{r=1}^{n}\frac{s^{r}}{r\br{-\mu}^{r}}}}^{2\frac{d_{Y}\dim\br{\chi}\vol\br{Y}}{\vol\br{X_{d}}}m_{d}\br{\mu,\gamma,\sigma}}=\]

\[e^{H_{1}\br{s}}\prod_{\rho\in S_{2}}\br{1-\frac{s}{\rho}}
\exp\br{\sum_{r=1}^{n}\frac{s^{r}}{r\rho^{r}}}.\]
\newline
Similarly, one obtains
\begin{equation}\label{trinaest}
\det\br{A_{Y,\chi}\br{\gamma,\sigma}^{2}+s^{2}}=
\end{equation}

\[s^{M}e^{h_{2}\br{s}}\prod_{\rho\in S_{1}\setminus\set{0}}\br{1-\frac{s}{\rho}}\exp\br{\sum_{r=1}^{n}\frac{s^{r}}{r\rho^{r}}},\]
\newline
where $h_{2}\br{s}$ is a polynomial of order $n$ and $M=2m_{\chi}\br{0,\gamma,\sigma}$ if $0$ is an eigenvalue of $A_{Y,\chi}\br{\gamma,\sigma}$. Otherwise, $M=0$.

Combining (\ref{jedanaest}), (\ref{dvanaest}) and (\ref{trinaest}), we get

\begin{equation}\label{cetrnaest}
Z_{S,\chi}\br{s,\sigma}=s^{M}e^{H\br{s}}\cdot
\end{equation}
\[\prod_{\rho\in S_{1}\setminus\set{0}}\br{1-\frac{s}{\rho}}\exp\br{\sum_{r=1}^{n}\frac{s^{r}}{r\rho^{r}}}\prod_{\rho\in S_{2}}\br{1-\frac{s}{\rho}}
\exp\br{\sum_{r=1}^{n}\frac{s^{r}}{r\rho^{r}}},\]
\newline
where $H\br{s}=h_{3}\br{s}+h_{2}\br{s}+H_{1}\br{s}$. Obviously, $\deg\br{H\br{s}}=n$.

Hence, reasoning as in the derivation of (\ref{devet}), we deduce

\begin{equation}\label{petnaest}
\frac{d^{n}}{ds^{n}}\frac{Z_{S,\chi}^{'}\br{s,\sigma}}{Z_{S,\chi}\br{s,\sigma}}=
\end{equation}
\[M\frac{n!}{s^{n+1}}+\sum_{\rho\in S_{1}\setminus\set{0}}\frac{n!}{\br{s-\rho}^{n+1}}+\sum_{\rho\in S_{2}}\frac{n!}{\br{s-\rho}^{n+1}}.\]
\newline
Combining (\ref{devet}) and (\ref{petnaest}), we obtain

\[\frac{d^{n+1}}{ds^{n+1}}\br{g_{1}\br{s}-g_{2}\br{s}}+\]

\[n_{1}\frac{n!}{s^{n+1}}-n_{2}\frac{n!}{s^{n+1}}+\sum_{\rho\in R_{1}\setminus\set{0}}\frac{n!}{\br{s-\rho}^{n+1}}-\sum_{\rho\in R_{2}\setminus\set{0}}\frac{n!}{\br{s-\rho}^{n+1}}=\]

\[M\frac{n!}{s^{n+1}}+\sum_{\rho\in S_{1}\setminus\set{0}}\frac{n!}{\br{s-\rho}^{n+1}}+\sum_{\rho\in S_{2}}\frac{n!}{\br{s-\rho}^{n+1}}.\]
\newline
From the definitions of the sets $R_{i}$, $S_{i}$, $i=1,2$, it follows that the corresponding sums cancel each other, i.e., we have

\[\frac{d^{n+1}}{ds^{n+1}}\br{g_{1}\br{s}-g_{2}\br{s}}+n_{1}\frac{n!}{s^{n+1}}-n_{2}\frac{n!}{s^{n+1}}=M\frac{n!}{s^{n+1}}.\]
\newline
Recall the definitions of $M$, $n_{i}$, $i=1,2$.

If $0$ is not an eigenvalue of $A_{Y,\chi}\br{\gamma,\sigma}$ then $s=0$ is not the singularity of $Z_{S,\chi}\br{s,\sigma}$. Hence, $M=n_{1}=n_{2}=0$. Otherwise, $s=0$ is either a zero or a pole of $Z_{S,\chi}\br{s,\sigma}$ and $M=2m_{\chi}\br{0,\gamma,\sigma}$. In the first case, $n_{1}=2m_{\chi}\br{0,\gamma,\sigma}$, $n_{2}=0$. Else, $n_{1}=0$, $n_{2}=-2m_{\chi}\br{0,\gamma,\sigma}$. In other words,
\[n_{1}\frac{n!}{s^{n+1}}-n_{2}\frac{n!}{s^{n+1}}=M\frac{n!}{s^{n+1}}.\]
Hence,
\begin{equation}\label{sesnaest}
\frac{d^{n+1}}{ds^{n+1}}\br{g_{1}\br{s}-g_{2}\br{s}}=0.
\end{equation}
\newline
Therefore, $g_{1}\br{s}-g_{2}\br{s}$ is a polynomial of degree at most $n$. Note that this fact does not necessarily imply that $\degg\br{g_{i}\br{s}}\leq n$ for $i=1,2$. Namely, (\ref{sesnaest}) only implies that $g_{1}\br{s}$ and $g_{2}\br{s}$ are of the form

\[g_{1}\br{s}=\sum_{i>n}\alpha_{i}s^{i}+\sum_{i=0}^{n}\beta_{i}s^{i},\]

\[g_{2}\br{s}=\sum_{i>n}\alpha_{i}s^{i}+\sum_{i=0}^{n}\delta_{i}s^{i},\]
\newline
for some coefficients $\alpha_{i}$, $\beta_{i}$, $\delta_{i}$. This, in turn, together with (\ref{osam}) and (\ref{jedan}) allow us to assume that $\degg\br{g_{i}\br{s}}\leq n$ for $i=1,2$. This completes the proof.
\end{proof}\

A trivial consequence of the Theorem \ref{novo} and the relation (\ref{trijedan}) is the following corollary (see also, \cite[p.~530]{AG})
\newline
\begin{corollary}\label{novo1}
The Ruelle zeta function $Z_{R,\chi}\br{s,\sigma}$ can be expressed as

\[Z_{R,\chi}\br{s,\sigma}=\frac{Z_{R}^{1}\br{s}}{Z_{R}^{2}\br{s}}.\]
\newline
Here, $Z_{R}^{1}\br{s}$, $Z_{R}^{2}\br{s}$ are entire functions of order at most $n$.
\end{corollary}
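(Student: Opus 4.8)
The proof will combine Theorem \ref{novo} with the factorisation (\ref{trijedan}) of the Ruelle zeta function into Selberg zeta functions. First I would record that the two products in (\ref{trijedan}) are finite: the outer index $p$ runs over $0,\dots,n-1$, and each set $I_{p}$ is finite because $\Lambda^{p}\mathfrak{n}_{\mathbb{C}}$ is finite-dimensional, so the decomposition $\Lambda^{p}\mathfrak{n}_{\mathbb{C}}=\sum_{\br{\tau,\lambda}\in I_{p}}V_{\tau}\otimes\mathbb{C}_{\lambda}$ under $MA$ involves only finitely many pairs $\br{\tau,\lambda}$. For each such pair I would decompose $\tau\otimes\sigma=\bigoplus_{j}\sigma_{j}$ into irreducible $M$-representations; since the monodromy $\mu_{\chi,\tau\otimes\sigma}\br{c}$ is block diagonal with respect to this decomposition, the defining infinite product factors as $Z_{S,\chi}\br{s,\tau\otimes\sigma}=\prod_{j}Z_{S,\chi}\br{s,\sigma_{j}}$. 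For every $\sigma_{j}\in\hat{M}$ we may fix a $\sigma_{j}$-admissible element $\gamma_{j}\in R\br{K}$ in the sense of Definition \ref{definition2.2} (such elements are provided by the construction in \cite{Bunke}), so that Theorem \ref{novo} applies and expresses $Z_{S,\chi}\br{s,\sigma_{j}}$ as a ratio of two entire functions of order at most $n$.

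Next I would assemble the global ratio. The substitution $s\mapsto s+\rho-\lambda$ occurring in (\ref{trijedan}) is a translation of the argument; writing $M_{f}\br{r}=\max_{\abs{s}=r}\abs{f\br{s}}$ for the maximum modulus of an entire $f$, the translate $g\br{s}=f\br{s+a}$ satisfies $M_{g}\br{r}\leq M_{f}\br{r+\abs{a}}$, and replacing $r$ by $r+\abs{a}$ leaves unchanged the limit superior that defines the order; thus translation preserves entirety and does not increase the order. Hence each factor appearing in (\ref{trijedan}) can be written as $A_{\tau,\lambda}\br{s}/B_{\tau,\lambda}\br{s}$ with $A_{\tau,\lambda},B_{\tau,\lambda}$ entire of order at most $n$. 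Raising to the power $\br{-1}^{p}$ leaves this ratio unchanged when $p$ is even and interchanges numerator and denominator when $p$ is odd; collecting the surviving numerators into a function $Z_{R}^{1}\br{s}$ and the surviving denominators into $Z_{R}^{2}\br{s}$, relation (\ref{trijedan}) becomes $Z_{R,\chi}\br{s,\sigma}=Z_{R}^{1}\br{s}/Z_{R}^{2}\br{s}$.

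Finally I would invoke the elementary fact that a finite product of entire functions of finite order is entire, of order equal to the largest of the orders of the factors; this follows from $M_{fg}\br{r}\leq M_{f}\br{r}M_{g}\br{r}$, which gives $\log M_{fg}\br{r}\leq 2r^{\rho+\varepsilon}$ for all large $r$ whenever $f$ and $g$ have order at most $\rho$ (see, e.g., \cite{Boas}). Since every factor of $Z_{R}^{1}$ and of $Z_{R}^{2}$ has order at most $n$, both functions are entire of order at most $n$, which is the assertion of the corollary. The argument is essentially bookkeeping and the substantive content lies entirely in Theorem \ref{novo}; the one point needing a little care is keeping track of which Selberg factors land in the numerator and which in the denominator once the exponent $\br{-1}^{p}$ has been absorbed, and this presents no real obstacle. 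If a reduced representation is wanted one may further cancel the common zeros of $Z_{R}^{1}$ and $Z_{R}^{2}$, but the statement as given does not require it.
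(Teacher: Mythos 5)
Your argument is correct and is essentially the same route the paper intends when it calls the corollary a trivial consequence of Theorem \ref{novo} and (\ref{trijedan}) (cf.\ \cite[p.~530]{AG}): finiteness of the sets $I_{p}$, decomposition of $\tau\otimes\sigma$ into irreducibles with $\sigma_{j}$-admissible elements from \cite{Bunke}, and the standard facts that translation of the argument and finite products do not increase the order. One cosmetic remark: the order of a finite product of entire functions is in general only \emph{at most} the maximum of the orders of the factors (e.g.\ $e^{s}\cdot e^{-s}=1$), not necessarily equal to it, but your maximum-modulus inequality proves exactly the upper bound that the corollary requires.
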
\

\begin{rem}{}\label{acknowledgment}
The authors thank the referee for valuable advices and suggestions.
\end{rem}

\end{document}